\documentclass[a4paper]{amsart}
\usepackage{amsaddr}
\usepackage[utf8]{inputenc}
\usepackage{t1enc}
\usepackage{tikz}
\usepackage{ifthen}
\usepackage{amssymb}
\usepackage{amsmath}
\usepackage{amsthm}
\usepackage{caption}
\usepackage{subcaption}
\usepackage{multirow,bigdelim}

\newtheorem{theorem}{Theorem}

\newtheorem{lemma}{Lemma}
\newtheorem{proposition}{Proposition}

\newtheorem{corollary}{Corollary}
\newtheorem{defn}{Definition}
\newtheorem{pb}{Problem}
\usepackage[colorlinks=true]{hyperref}
\usepackage{tikz-cd}

\makeatletter 
\newbox\cours@box
   {
    \nobreak\par\nobreak\noindent%
    \setbox\cours@box=\hbox{%
      \kern .5em\vbox{%
        \hrule width .7em height .7em
        \vskip\baselineskip}}%
    \wd\cours@box=0mm%
    \ht\cours@box=0mm%
    \hfill\box\cours@box%
\endlist}
\makeatother

\title{Combinatorial Search for Strong Shift Equivalence}
\author{Emmanuel Jeandel}
\address{Université de Lorraine, CNRS, Inria, LORIA, 54000, Nancy, France}
\email{emmanuel.jeandel@loria.fr}
\begin{document}

\maketitle
\begin{abstract}

  The most important open problem of symbolic dynamics consists in finding a procedure to decide whether two subshifts of finite type are   conjugate or, equivalently, whether two nonnegative integer matrices are
  strong shift equivalent.  While the problem remains hard in general, the literature also contains specific examples of matrices that  are not known to be strong shift equivalent: the Baker matrices,
  and Ashley's eight-by-eight. These specific examples have been open  for 30 years.  Using computer experiments, we show that the Baker matrices are indeed strong shift equivalent for small values of $k$,
  and that Ashley's eight-by-eight matrix is indeed strong shift   equivalent to the one-by-one matrix $\begin{pmatrix}  2 \end{pmatrix}$.
 
\end{abstract}	

The main objects of study in symbolic dynamics are sets of biinfinite words, seen as a dynamical system under the  shift action, which arise naturally when considering
symbolic coding of trajectories. For details,, see \cite{LindMarcus}.  Of particular interest are shifts of finite type, which can be
defined as the set of all binfinite paths on a finite graph. Since the field's inception, the most important question has been to decide whether two shifts of
finite type are topologically conjugate, i.e. equal up to a homeomorphism that commutes with the shift action.

The question has been studied extensively for the past 50 years, since the article of Williams \cite{williams1,williams2} which rephrased it in terms
of matrices: Two nonnegative integer matrices $M$ and $N$ are elementary strong shift equivalent if $M=RS$ and $N=SR$ for two nonnegative
(not necessarily square) integer matrices $R$ and $S$, and \emph{strong shift equivalence} is the transitive closure of elementary strong shift
equivalence.  In his article, Williams showed that topological conjugacy of shifts of finite type is equivalent to deciding strong shift equivalence.

This problem of deciding topological conjugacy has remained tantalizingly open.  It is known to be
decidable for \emph{one-sided} shifts of finite type \cite{williams1},
and is undecidable for more general shifts, or for shifts in higher
dimensions \cite{Berger2,JCSS2015}.

Some progress has been made, mostly in terms of necessary conditions \cite{parry64,parry75,ParryW,BowenFranks,krieger80,eilers12}. In particular, shift equivalence~\cite{williams1}, which is a decidable condition, has been conjectured for a long time to be equivalent to strong shift equivalence. While it has now been proven that these are distinct \cite{kim,wagoner99}, we know only of a few counterexamples.

On the other hand, it has been difficult to find sufficient conditions for two matrices to be SSE \cite{cuntzkrieger80,baker83,baker87,choe97}, the best one solving the problem only for 2 by 2 matrices with positive (or small negative) determinant.

The literature \cite{LindMarcus,Kitchens,BoyleOpen,boyle24} usually focuses on a few specific instances, which examplify the state of our (lack of) knowledge. The first open question is from Baker: Are the matrices $\begin{pmatrix} 1 & k+1 \\ k & 1 \end{pmatrix}$ and  $\begin{pmatrix} 1 & k(k+1) \\ 1 & 1 \end{pmatrix}$ strong shift equivalent? The result is known for $k = 2$ but remains open for $k \geq 3$. Another example is the Ashley matrix, defined later on in this paper, which is conjectured to be strong shift equivalent to the 1 by 1 matrix $\begin{pmatrix} 2 \end{pmatrix}$. No progress has been made on these examples since Baker's original articles, although there has been some investigation specifically on the Ashley matrix \cite{bauer92}.

In this article, we investigate the problem using computer experiments. There have been few documented instances of experiments in this subject. The first investigation was probably conducted by Baker, who spent 8 hours of computer time \cite{williams92} to prove that $\begin{pmatrix} 1 & 4 \\ 3 & 1\end{pmatrix}$ and $\begin{pmatrix} 1 & 12 \\  1 & 1\end{pmatrix}$ are SSE.
Eilers and his student Lund Jensen investigated which 2 by 2 matrices are SSE, and which invariants can prove that some are not SSE. Some account of the experiment is described in \cite{eilers12}.

Computers have now become more powerful and it is now possible to say more about these open problems. Indeed, we have solved the Baker problem for all $k \leq 10$ and for $k = 12$. We also proved that the Ashley matrix is indeed strong shift equivalent to the matrix $\begin{pmatrix} 2 \end{pmatrix}$.
The method used to find the strong shift equivalence is fairly standard, but it nevertheless requires a large amount of computational resources, particularly in terms  of memory.

An interesting byproduct of the method is that we prove a generalization of the cases $k = 2$ and $k = 3$ which suggest that a hidden structure is to be found (see section \ref{thm:general}).

The article is organized as follows. We first give the necessary definitions to understand and formulate the problem, and some links to well-known results that will be necessary to understand the discussions. We then state the theorems, with all proofs relegated to the appendices, and then give a brief overview of how the computer program works.

\section*{Acknowledgements}

Experiments presented in this paper were carried out using the Abaca infrastructure, supported by Inria (see \url{https://abaca.inria.fr}).
E.J. wants to thanks Mike Boyle, Nishant Chandgotia, Brian Marcus for discussions on the paper, as well as Søren Eilers for providing the results of his previous experiment.

\section{Background}

\subsection{Definitions}
Wherever possible, we  follow the notations from \cite{boyle24}.
Let $\mathcal{S}$ be a semiring, usually $\mathcal{S} = \mathbb{Z}_+$, the set of nonnegative integers.
 
\begin{defn}
  Two square matrices $A$ and $B$, possibly of different size, with elements in $\mathcal{S}$, are elementary strong shift equivalent (ESSE, or ESSE-$\mathcal{S}$ if we want to emphasize the semiring), if there exist matrices $R,S$ over $\mathcal{S}$ s.t., $A = RS$ and $B = SR$.

  Strong shift equivalence is the symmetric, reflexive, and transitive closure of elementary strong shift equivalence:  Two matrices $A$ and $B$ over $\mathcal{S}$ are strong shift equivalent (SSE, or SSE-$\mathcal{S}$) if there exist matrices $A_1, \dots, A_l$ s.t. $A = A_1$, $A_l = B$, and $A_i$ is ESSE-$\mathcal{S}$ to $A_{i+1}$.
\end{defn}

\begin{pb}[Main open problem of symbolic dynamics]
Can we decide SSE equivalence over $\mathbb{Z}_+$ ?  
\end{pb}
As explained above, by \cite{williams1}, this problem is equivalent  to deciding whether two subshifts of finite type are conjugate.

\subsection{Necessary conditions}

The literature abounds with necessary conditions for SSE, usually formulated in terms of invariants: an invariant $\phi$ assigns to each matrix $M$ a quantity  $\phi(M)$ s.t. if $M$ and $N$ are SSE, then $\phi(M) = \phi(N)$.

We refer the reader to \cite{LindMarcus,boyle24} for a comprehensive list of necessary conditions and state the following: matrices that are SSE have the same trace and have the same nonzero spectrum (i.e., the list of eigenvalues with their multiplicities, excluding $0$) \cite{ParryW}.

Furthermore, matrices that are SSE are also \emph{shift equivalent}:

\begin{defn}[\cite{williams70,williams1}]
  \label{defn:se}
  Two nonnegative integer matrices  $A$ and $B$ are shift equivalent (SE) if there exist two nonnegative integer matrices $R$ and $S$ and an integer $k$ s.t.
  \begin{itemize}
    \item $AR = RB$,
    \item $SA = BS$,
    \item $A^k = RS$,
    \item $B^k = SR$.
  \end{itemize}
  The integer $k$ is called the \emph{lag} of the shift equivalence.
\end{defn}

Given an SSE $A = M_1 = R_1S_1, S_1R_1=M_2, \dots, S_nR_n = M_n = B$, one obtains a shift equivalence by taking $R = R_1 \cdots R_n$ and $S = S_n \cdots S_1$.
The article \cite{williams1} mistakenly states that there is a converse \cite{williams2}; however counterexamples were later found \cite{kim,wagoner99}. The fact that there are so few known counterexamples means, however, that testing for shift equivalence is usually a good indication that matrices are SSE, and the lag of the shift equivalence is a lower bound for the number of steps in the SSE, if it exists.

Similarity over $\mathbb{Z}$ is another good indication that matrices may be SSE. Indeed, if $A$ and $B$ are nonnegative integer primitive matrices\footnote{A nonnegative matrix is primitive if there exists an integer $n$ s.t. all entries of $A^n$ are positive.} s.t. $B = PAP^{-1}$ for some $P \in GL_n(\mathbb{Z})$, then $A$ and $B$ are shift equivalent \cite[Theorem 7.3.6]{ParryW,LindMarcus}. See \cite{ha96} for other links between SSE and similarity.


\subsection{Sufficient conditions}

Few good sufficient conditions have been found; most of them focus on $2\times 2$ matrices.

\begin{theorem}
  \label{thm:conditions}
  In the following, all matrices are $2 \times 2$ primitive nonnegative integer matrices.
  \begin{itemize}
  \item \cite{cuntzkrieger80} If $A$ and $B$ have determinant $1$, then they are SSE  iff they have isomorphic dimension groups and the same entropy (in particular iff they are shift equivalent).
  \item \cite{baker87} If $A$ and $B$ are similar over $\mathbb{Z}$ and $\det A \geq -\mathrm{tr}(A)$, then they are SSE.
  \item \cite{choe97} If $A$ and $B$ are similar over $\mathbb{Z}$, $\det A \geq -2\mathrm{tr}(A)$, and $|\det A|$ is not a prime, then they are SSE.    
  \end{itemize}  
\end{theorem}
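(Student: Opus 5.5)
This theorem collects three published results, so the plan reconstructs the main line of each argument. All three rest on one reduction: for a $2\times 2$ primitive matrix $A$, and any $P\in GL_2(\mathbb{Z})$ for which $B=PAP^{-1}$ is again nonnegative, we want to show $A$ and $B$ are SSE over $\mathbb{Z}_+$. In each case the strategy is to factor the conjugacy into elementary moves, each realised by a short chain of elementary strong shift equivalences. For the first item (Cuntz--Krieger) we first show that, when the determinant is $1$, isomorphism of dimension groups together with equal entropy forces $A$ and $B$ to be similar over $\mathbb{Z}$. The first item then reduces to the similarity case, with the extra structure that $A$ lies in $SL_2(\mathbb{Z})\cap M_2(\mathbb{Z}_+)$.

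\textbf{Step 1: elementary moves.} $GL_2(\mathbb{Z})$ is generated by the permutation matrix and the elementary matrices $E=\begin{pmatrix}1&1\\0&1\end{pmatrix}$, $E^{T}$. For a single generator, conjugation $A\mapsto EAE^{-1}$ is ESSE precisely when it can be written as $A=RS$, $EAE^{-1}=SR$ with $R=E^{-1}$-type factors adjusted to be nonnegative. Concretely, if $A=E\,(E^{-1}A)$ with $E^{-1}A\ge 0$ (a row of $A$ dominates the other), then $A$ is ESSE to $(E^{-1}A)E$. We will also use $3\times 3$ state-splitting intermediates: split a vertex of the graph of $A$, then amalgamate, which realises conjugations by $E$ even when no direct row domination holds, provided some entries are large enough.

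\textbf{Step 2: a reduced form.} Using the trace $t$ and determinant $d$, we will move every nonnegative matrix in the $\mathbb{Z}$-similarity class to a finite set of canonical representatives by a Euclid-like descent: repeatedly subtract a smaller off-diagonal entry from a larger one via Step 1. Each move strictly decreases $\max$ entry, or the sum of the off-diagonal entries, while staying nonnegative. The condition $d\ge -t$ (resp.\ $d\ge -2t$) is exactly what guarantees that, at a matrix where no direct subtraction keeps nonnegativity, a $3\times3$ splitting detour still exists. Writing $A=\begin{pmatrix}a&b\\c&e\end{pmatrix}$ with $ae-bc=d$, if the descent is blocked then $a,e$ are small relative to $b,c$, and $bc=ae-d\le ae+t$ bounds everything.

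\textbf{Step 3: connecting canonical forms.} Two reduced matrices that are $\mathbb{Z}$-similar must then be connected directly, using the classification of $\mathbb{Z}$-similarity classes as ideal classes in $\mathbb{Z}[\lambda]$ (Latimer--MacDuffee). We show each class has a reduced representative reachable by nonnegative moves, as in cycles of reduced forms for binary quadratic forms.

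I expect the main obstacle to be Step 2 in the negative-determinant regime, and in the third item the non-prime hypothesis. There one must factor $|d|=pq$ to build a $3\times3$ or $4\times4$ intermediate whose nonnegativity uses $d\ge-2t$. I would first try an explicit intermediate $R\in M_{2\times3}$, $S\in M_{3\times2}$ whose entries are parametrised by $p,q$, and check nonnegativity directly. For the determinant-one case, the additional input is that the dimension group determines the $GL_2(\mathbb{Z})$-class of $A$ up to the entropy data.
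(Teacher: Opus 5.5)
The paper gives no proof of this theorem; it only quotes three published results. Measured against those results, your proposal has a genuine gap.

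Your reduction of the first item is correct and easy to complete. An order isomorphism $P\in GL_2(\mathbb{Z})$ of the dimension groups $\mathbb{Z}^2$ must carry the irrational boundary line of one positive cone onto the other. So $C=P^{-1}BP$ and $A$ share the left Perron eigenvector $w$ with the same eigenvalue. Then $w^T(C-A)=0$ with $w$ of irrational slope forces $C=A$. After that, the first item is a special case of the second, since $1\ge -\mathrm{tr}A$.

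For the second and third items, however, everything rests on two claims you never justify. The first is that under $d\ge -t$ a ``blocked'' matrix always admits a $3\times 3$ splitting detour. The second is that two nonnegative reduced representatives of the same $GL_2(\mathbb{Z})$-class can be joined by nonnegative moves. Together these essentially restate the cited theorems, so writing ``we show'' in their place leaves the proof empty. The same holds for the third item: you only announce a search for $R\in M_{2\times 3}$, $S\in M_{3\times 2}$ parametrised by $|d|=pq$, without exhibiting one or explaining why non-primality matters.

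The scaffolding around these claims does not carry the argument either.
\begin{itemize}
\item Step 2's descent is vacuous. A primitive nonnegative $2\times 2$ matrix with trace $t$ and determinant $d$ has $a+e=t$ and $bc=ae-d>0$, so there are only finitely many of them to begin with.
\item Your potential is also not monotone. The Step 1 move sends $\begin{pmatrix}5&6\\1&1\end{pmatrix}$ to $\begin{pmatrix}4&9\\1&2\end{pmatrix}$, which raises both the maximal entry and the off-diagonal sum.
\item Step 3 presupposes the connectivity it is meant to prove. An indefinite class contains a whole cycle of reduced forms. Latimer--MacDuffee only tells you these are conjugate, not that you can walk between them through nonnegative matrices. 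It also needs an irreducible characteristic polynomial, which fails e.g.\ for $\begin{pmatrix}2&1\\1&2\end{pmatrix}$.
\end{itemize}

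That walk is exactly where the hypotheses must enter, and your proposal never says how. For instance, if $b,c>0$ then $(a-e)^2=t^2-4d-4bc<t^2-4d$. Under $d\ge -t$ this gives $|a-e|\le t$ by parity, so every conjugate with positive off-diagonal entries is automatically nonnegative. Under $d\ge -2t$ one only gets diagonal entries $\ge -1$, so such conjugates must be bypassed. This is presumably where the factorisation of $|d|$ is used.

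Even then, consecutive conjugates $A$ and $E^{-1}AE$ must still be linked by strong shift equivalences. The direct factorisation $A=E(E^{-1}A)$ needs $E^{-1}A\ge 0$, which can fail even when both ends are nonnegative (take $A=\begin{pmatrix}3&1\\1&2\end{pmatrix}$). Precise statements of this kind, with proofs, are what your argument is missing.
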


The method explained in \cite{baker87} is slightly more general, as  it can sometimes be applied to higher dimensional matrices.

\subsection{Open problems}

Little is known on the $2 \times 2$ case beyond the previous theorem. In particular, the following problem remains open:
\begin{pb}
  If $A$ and $B$ are $2 \times 2$ primitive nonnegative integer matrices similar over $\mathbb{Z}$, are they SSE ?  
\end{pb}
This is known to be false for $7 \times 7$ matrices, as shown in the seminal article of Kim and Roush \cite{kim}.

One concrete instance of the problem is given by the examples of Baker:
\begin{pb}[Baker's matrices \cite{LindMarcus,BoyleOpen,boyle24}]
  Are the matrices $A_k = \begin{pmatrix} 1 & k+1 \\ k & 1 \end{pmatrix}$ and  $B_k = \begin{pmatrix} 1 & k(k+1) \\ 1 & 1 \end{pmatrix}$ SSE ?
\end{pb}
The result is  known only for $k \leq 2$.

Another way to instantiate the general open problem of deciding SSE in general is to fix the matrix $B$:

\begin{pb}[Little shift equivalence conjecture for $n=2$ \cite{Kitchens,BoyleOpen,boyle24}]
  If $A$ is a nonnegative integer matrix with a single nonzero eigenvalue $2$ of multiplicity 1, is $A$ SSE to the matrix $\begin{pmatrix} 2 \end{pmatrix}$ ?
\end{pb}
A concrete open example of the little shift equivalence conjecture is Ashley's eight-by-eight which, up to permutation, is given by the following matrix:

\[
\begin{pmatrix}
  1&1&0&0&0&0&0&0\\
  0&0&1&0&0&1&0&0\\
  0&0&0&1&0&0&1&0\\
  0&1&0&0&1&0&0&0\\
  0&0&0&1&0&1&0&0\\
  0&0&1&0&0&0&1&0\\
  0&0&0&0&1&0&0&1\\
  1&0&0&0&0&0&0&1
  \end{pmatrix}
\]

This matrix was studied in detail in  \cite{bauer92} in an attempt do prove  it is not SSE to $\begin{pmatrix} 2 \end{pmatrix}$.

\section{Results}

In this section, we summarize the results we obtained. Due to the nature of the proof, and to not submit a 200 pages articles, the proofs in the article but on the accompanying website \url{https://members.loria.fr/EJeandel/sse.html} (This will be switched for a more permanent solution for the final version of the article).

\subsection{Baker matrices}
First, we focus on Baker's matrices, and more generally on $2\times 2$ matrices.
Through computer experiments, we prove the following:
\begin{theorem}
\label{thm:baker}
  Baker's  matrices $A_k$ and $B_k$ are SSE for $k \leq 10$ and $k = 12$. The SSE uses $6\times 6$ matrices for $k =8, 9, 10$ and $5 \times 5$ matrices otherwise.

  The number of steps of the SSE is:
  
  \begin{tabular}{|l|l||l|l|}
    \hline
    $k$ & Steps & $k$ & Steps\\
    \hline
    3 & 17& 8 & 110 \\
    4 & 28& 9 & 154\\
    5 & 44& 10 & 193\\
    6 & 73& 12 & 14\\
    7 & 17&&\\
    \hline
    \end{tabular}
  \end{theorem}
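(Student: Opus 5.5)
The proof is a certificate. For each listed $k$, we exhibit an explicit chain of nonnegative integer matrices $A_k = M_0, M_1, \dots, M_\ell = B_k$, together with explicit factorizations $M_{i} = R_iS_i$ and $M_{i+1} = S_iR_i$ with $R_i,S_i$ over $\mathbb{Z}_+$. Checking the theorem then reduces to finitely many integer matrix multiplications per step, which can be done mechanically. Some steps may go the other way, $M_i = S_iR_i$ and $M_{i+1}=R_iS_i$, which is fine since SSE is symmetric. The real work is finding the chain, and I would do this by computer search.

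\textbf{Search space.} Fix a maximal size $n$ (here $5$ or $6$) and consider the set of all nonnegative integer matrices of size at most $n$ with the same trace and nonzero spectrum as $A_k$. These are invariants by the discussion on necessary conditions, so no other matrix can appear on the chain. Since $A_k$ and $B_k$ are primitive, entries are bounded once we impose the trace and spectral constraints together with a cap on the entries. I would quotient by simultaneous permutation of rows and columns, using a canonical form, to shrink the space.

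\textbf{Neighbour enumeration.} For a matrix $M$ of size $m$, enumerate all factorizations $M = RS$ with $R$ of size $m\times p$ and $S$ of size $p\times m$, $p\le n$, and record $SR$. It is cheaper to generate moves by elementary operations such as state splittings and amalgamations (in- and out-splittings), which are special ESSEs and are easy to enumerate: split a row of $M$ into a sum of nonnegative rows and duplicate the corresponding column, or the transposed operation. I would then run a bidirectional breadth-first search from $A_k$ and from $B_k$ over canonical forms, storing visited matrices in a hash table, until the two frontiers meet. A path is reconstructed from parent pointers, and each edge stores its $(R,S)$ as the certificate.

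\textbf{Main obstacle.} The difficulty is size. The graph grows explosively with $n$ and with the entries, of order $k(k+1)$, so memory is the binding constraint. This is why $k=8,9,10$ need $6\times 6$ intermediates. To keep it tractable I would first bound entries by a small multiple of the maximal entry of $B_k$. Next I would prune matrices whose shift-equivalence invariants differ. Then I would favour moves that keep the total entry sum small, in a best-first or iterative-deepening fashion. The lag of a shift equivalence between $A_k$ and $B_k$ gives a lower bound on the path length and serves as a sanity check. Finally, I would post-process each found path to shorten it by trying direct ESSEs between non-adjacent matrices. The step counts in the table are then simply the lengths of the certificates obtained, and verification is routine multiplication.
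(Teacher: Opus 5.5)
Your proposal takes essentially the same approach as the paper: each case is certified by an explicit chain of elementary SSEs, checked by multiplication, found by a computer search over split/amalgamation-type moves on essential matrices taken up to permutation under a size cap, and then shortened. The differences are only in implementation. The paper searches from $A_k$ alone, shortens paths by factoring a common full-column-rank left factor out of $S_k$ and $R_{k+1}$ rather than testing direct ESSEs between distant matrices, and also uses shear and rescaling moves, which matters under a fixed $5\times 5$ or $6\times 6$ cap because simulating them by splits needs a temporarily larger matrix; note too that your SE-invariant pruning is vacuous, since everything reachable from $A_k$ is already SSE to it.
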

Our experiments suggest that the number of steps of an SSE from $A_k$ to $B_k$, assuming it exists, grows quadratically with $k$.
As $k$ increases, our algorithm requires more memory. It is thus not surprising that we we could not find any SSE for any $k \geq 11$.
Surprisingly, there is an exception for $k = 12$. The SSE for $k = 7$ is also surprisingly small.

We cannot yet provide an explanation, but we offer the following insight. From an SSE from $A_k$ to $B_k$, one can construct a shift equivalence (SE) from $A_k$ to $B_k$ using the process described after Definition~\ref{defn:se}.
For all  $k$ except $7$ and $12$, the shift equivalence we obtain is the obvious one arising from the similarity of $A_k$ and $B_k$, namely $R = \begin{pmatrix} 1 & k+1 \\ 1 &k \end{pmatrix}$ and $S = R^{-1} A_k^n$ where $n$ is chosen large enough for $S$ to be nonnegative.
In the remaining cases ($k=7$ and $k=12$), We obtain different shift equivalences. For $k = 7$, we obtain a SE of lag $4$ with $R = \begin{pmatrix}1 & 16 \\ 2 & 7\end{pmatrix}$ and $S = R^{-1} A_7^4$.
For $k = 12$, we obtain a SE  of lag $4$ with $R = \begin{pmatrix}1 & 91 \\ 7 & 12\end{pmatrix}$ and $S = R^{-1} A_{12}^4$.
Preliminary work suggests, but does not prove, that these are the only values of $k > 2$ for which a SE of lag $4$ exists.

\subsection{Generalizations}

Once a proof of SSE from, say, $A_3$ to $B_3$ is obtained,  the intermediate steps can be adjusted to yield different SSE chains between the same matrices.
it is therefore not clear which proof is the \emph{right one}, and if in fact the computer exploration found one that can be explained and generalized.

The next theorem provides, for $k = 2,3,4$, proofs that are generic enough to be extended to numerous other SSEs:

\begin{theorem}
  \label{thm:general}

  Let $S = \mathbb{Z}_+[x,y,z]$ be the semiring of nonnegative polynomials in three variables.

Define
$C_k = \begin{pmatrix}  x+z & x+(k+1)z\\  y+kz & y+z\\\end{pmatrix}$
and
$D_k = \begin{pmatrix}
  x+z & (x+(k+1)z)(y+kz)\\
  1 & y+z\\
\end{pmatrix}$.

For $k = 2,3,4$, $C_k$ and $D_k$ are SSE in the semiring $S$, respectively in 7, 17 and 48 steps.
\end{theorem}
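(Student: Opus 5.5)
The statement is existential and finite, so the proof is an explicit certificate. For each $k\in\{2,3,4\}$ I will write down a chain $C_k=M_1,M_2,\dots,M_{l}=D_k$ of square matrices over $\mathbb{Z}_+[x,y,z]$, with $l-1=7,17,48$ respectively. With it I give factorizations $M_i=R_iS_i$ and $M_{i+1}=S_iR_i$, where every entry of every $R_i,S_i$ is a polynomial with nonnegative integer coefficients. Checking each step is mechanical: multiply the polynomial matrices, compare with the next matrix, and check coefficient signs. This is best done by a computer algebra script, which the paper can distribute alongside the certificate.

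To build the chain, I first use the computer search behind Theorem~\ref{thm:baker}. Specializing $x=y=1$, $z=1$ in $C_k$ and $D_k$ gives small integer matrices in the same similarity class as Baker-type matrices. Setting $x=y=0$, $z=1$ gives exactly $A_k$ and $B_k$, since then $C_k=\begin{pmatrix}1&k+1\\k&1\end{pmatrix}$ and $D_k=\begin{pmatrix}1&k(k+1)\\1&1\end{pmatrix}$. So I take an integer SSE chain found by the search for the specialization. I then try to \emph{lift} each elementary step symbolically: I write each entry of $R_i$ and $S_i$ as a linear form $a x+b y+c z+d$ with unknown nonnegative integer coefficients. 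I require $R_iS_i$ to equal the current (already lifted) matrix and $S_iR_i$ to equal a matrix that specializes correctly. A convenient choice is to use only "row/column splitting" steps, which are state splittings of the associated graph. Each such step is an elementary SSE in which one of $R,S$ is a $0$–$1$ division matrix. In that case the lifted factorization is forced up to the choice of how to split polynomial entries into sums of monomials, and it automatically stays in $\mathbb{Z}_+[x,y,z]$.

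The strategy of the proof is to arrange that every intermediate matrix is linear in $x,y,z$, except for the final amalgamation producing the product entry $(x+(k+1)z)(y+kz)$. Splitting a state whose row contains $x+z$ into pieces carrying $x$ and $z$ separately lets the variables be tracked as edge labels. The chain is then a sequence of in/out splittings and amalgamations of a labeled graph, where the labels $x$ and $y$ ride along on two fixed loops and $z$ labels the "Baker" part. For $k=2$ I expect a 7-step chain to be writable by hand, and it would serve as a template.

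The main obstacle is the lifting: an integer SSE step for the specialization need not lift to the polynomial semiring. Splitting a polynomial entry such as $x+(k+1)z$ among several new states must be done consistently with the column amalgamations performed later. If a lift fails, I would redo the search directly over the labeled setting. That means searching over matrices whose entries are nonnegative linear forms, with states treated as vertices of an edge-labeled graph and only label-preserving splittings allowed. I would use the specialization only as a pruning invariant: for example, the spectrum at several random specializations must match that of $C_k$.
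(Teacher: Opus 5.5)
Presenting the proof as an explicit, mechanically checkable chain found by computer is what the paper does. However, the move set you propose cannot produce such a chain. Consider the property that every entry is a nonnegative integer linear form in $x,y,z$ (possibly $0$). In a splitting or amalgamation, $M=DA$ and $N=AD$ (or the transposed version) with $D$ a $0$--$1$ division matrix. So the entries of $N$ are entries of $A$, and the entries of $M$ are sums of entries of $A$. Nothing cancels in $\mathbb{Z}_+[x,y,z]$, so a summand of a nonnegative linear form is again one. Hence the property passes from $M$ to $N$ and back; the same holds for ordinary shear moves and for rescalings by integers. $C_k$ has the property, while $D_k$ contains the entries $1$ and $(x+(k+1)z)(y+kz)$. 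Therefore no chain of splittings and amalgamations joins $C_k$ to $D_k$. This rules out both a lift of an integer chain built this way and the output of your fallback search with label-preserving splittings. In particular, the ``final amalgamation producing the product entry'' cannot exist: amalgamation adds entries, it never multiplies them.

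The missing idea is that the chain must contain elementary SSEs in which both factors have non-constant entries, typically a rescaling by a polynomial. For example, out-split the second state of $C_k$ so that one copy keeps only the outgoing entry $y+kz$. This gives the matrix with rows $(x+z,\,x+(k+1)z,\,x+(k+1)z)$, $(y+kz,0,0)$, $(0,y+z,y+z)$. Dividing the middle row by $y+kz$ and multiplying the middle column by $y+kz$ is an elementary SSE over $\mathbb{Z}_+[x,y,z]$, and it creates exactly the entries $1$ and $(x+(k+1)z)(y+kz)$. The direct rescaling of $C_k$ by $\mathrm{diag}(1,y+kz)$ is blocked because $y+z$ is not divisible by $y+kz$; that is where the real work of the chain lies. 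Your lifting must therefore allow an integer step to lift to such a polynomial step, e.g.\ a scale by $k$ lifting to a scale by $y+kz$, or a shear lifting to a shear with coefficient $z$. This is also where the specialization $x=y=0$, $z=1$ is ambiguous, since an integer entry $1$ may have to lift to $1$ or to $z$. With this enlarged move set your plan is a reasonable route to the kind of certificate the paper provides. Note that the paper reports that its chain for $A_4,B_4$ cannot be transformed into one for $C_4,D_4$. So for $k=4$ the search must be run over the polynomial semiring itself.
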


In more intuitive terms, for all nonnegative integer values of $x, y, z$, there exists a SSE from
$\begin{pmatrix}  x+z & x+3z\\  y+2z & y+z\\\end{pmatrix}$
to
$\begin{pmatrix}
  x+z & (x+3z)(y+2z)\\
  1 & y+z\\
\end{pmatrix}$

Moreover, this SSE is independent of the exact values of $x,y$ and $z$.
For $k=2,3$, the number of steps coincides with the result in Theorem~\ref{thm:baker}. This is not the case for $k = 4$. While $48$ might not be optimal, it is not possible to transform the proof we found for $A_4$ and $B_4$ into a proof for $C_4$ and $D_4$.

The Baker matrices correspond to the case where $x=y=0$, $z=1$.
The case $x=1,y=0,z=1$ gives a SSE from $\begin{pmatrix}
  2 & 5 \\
  3 & 1
\end{pmatrix}$
to
$\begin{pmatrix}
  2 & 15 \\
  1 & 1
\end{pmatrix}$, which was posed as an open problem in \cite{baker83}.

As an interesting byproduct of the theorem, consider the sequences of matrices $G_k = \begin{pmatrix} k & 5 \\ 4 & k \end{pmatrix}$ and $H_k = \begin{pmatrix} k & 20 \\ 1 & k \end{pmatrix}$.
The matrices $G_k$ and $H_k$ are similar over $\mathbb{Z}$ via the same two by two matrix $P$ for all $k$.
By Theorem~\ref{thm:baker}, $G_1$ and $H_1$ are SSE using 28 steps. Moreoever $G_2$ and $H_2$ are SSE using 17 steps since $G_2 = C_3$ for $x=y=z=1$. Additionally $G_3$ and $H_3$ are SSE using 7 steps as $G_2 = C_2$ for $x=y=2$ and $z=1$.
$G_4$ and $H_4$ are easily seen to be elementary SSE, and the same holds for $G_5$ and $H_5$. For $k \geq 6$, the matrices  have positive determinant, and therefore are SSE by Theorem~\ref{thm:conditions}.

This suggests that, when two matrices $A$ and $B$ are similar over $\mathbb{Z}$, the matrix $P$ realizing the similarity does not reliably predict the complexity of the SSE.

\subsection{Other 2 by 2 matrices}

\begin{theorem}
  The matrices $E_n = \begin{pmatrix} 1 & n \\ 2 & 1 \end{pmatrix} $ and $F_n = \begin{pmatrix} 1 & 2n \\ 1 & 1\end{pmatrix}$
    are SSE for all odd  $n < 50$ except $n = 15, 21, 37, 45$. For these four values, the matrices are not even shift equivalent, and hence not SSE.
\end{theorem}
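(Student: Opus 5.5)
The statement has two halves, and I would handle them with different tools: an obstruction argument for the four exceptional values, and an explicit search for the rest.

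\emph{Exceptional values $n=15,21,37,45$.} Here $E_n$ and $F_n$ have the same characteristic polynomial $t^2-2t+1-2n$, with discriminant $8n$. Since $E_n$ and $F_n$ are primitive and have nonzero determinant, they are invertible over $\mathbb{Q}$. Shift equivalence over $\mathbb{Z}_+$ of such matrices is then equivalent to shift equivalence over $\mathbb{Z}$. For $2\times 2$ matrices with nonzero determinant, shift equivalence over $\mathbb{Z}$ amounts to an isomorphism of the $\mathbb{Z}[t,t^{-1}]$-modules $\mathbb{Z}[1/\det]^2$ on which $t$ acts by the matrix.

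Through the Latimer--MacDuffee correspondence, $\mathbb{Z}$-similarity classes of matrices with characteristic polynomial $p$ correspond to ideal classes of the order $\mathbb{Z}[\lambda]$, where $\lambda$ is a root of $p$. Explicitly, $\lambda = 1+\sqrt{2n}$ and $\mathbb{Z}[\lambda]=\mathbb{Z}[\sqrt{2n}]$. Inverting $\det = 1-2n$ corresponds to passing to the localized order, that is, to ideal classes modulo ideals supported on primes dividing $2n-1$.

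The plan is to compute, for each exceptional $n$, the ideals attached to $E_n$ and $F_n$. For $F_n$ one takes the eigenvector $(\sqrt{2n},1)$, giving the ideal $(\sqrt{2n},1)$, which is principal. For $E_n$ the eigenvector gives an ideal of the form $(\sqrt{2n},2)$, which is essentially the ramified prime above $2$. So the question reduces to whether $(2,\sqrt{2n})$ becomes principal after inverting the primes dividing $2n-1$.

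For $n=15,21,37,45$ I would exhibit an obstruction, for instance a genus character or a norm equation modulo small primes. Concretely, I would show that $x^2-2ny^2=\pm 2\cdot\prod p_i^{e_i}$, with the $p_i$ dividing $2n-1$, has no solutions. This can be checked via a quadratic residue condition on a suitable modulus. An alternative is to run a decision procedure for shift equivalence over $\mathbb{Z}$ directly; this can be implemented for small matrices.

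\emph{All other odd $n<50$.} For each such $n$ I would find an explicit SSE chain by computer search, as for Theorem~\ref{thm:baker}. First I would check that shift equivalence holds, and obtain from it a candidate matrix $R$ and lag, which bounds the number of steps from below. Then I would run a breadth-first or bidirectional search over nonnegative integer matrices of bounded size (say up to $5\times 5$ or $6\times 6$), generated by elementary factorizations $M=RS\mapsto SR$. The search would be pruned by trace, by entry-size bounds and by canonical forms up to permutation, until the orbits of $E_n$ and $F_n$ meet.

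Some cases should be easy. When $n$ is even or has a suitable factor, Theorem~\ref{thm:conditions} does not apply since $\det<0$, but for some $n$ a direct elementary SSE may exist. The resulting chains are the certificates, and each step can be verified mechanically.

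I expect the main obstacle to be the search itself for the larger $n$. The memory needed to meet in the middle grows quickly, as the paper already notes in the Baker case. For the negative cases, the difficulty is correctly identifying the ideal-class invariant for the localized order, which is non-maximal when $2n$ is not squarefree (e.g.\ $n=45$).
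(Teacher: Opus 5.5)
Your positive half is the paper's: the SSEs for the other odd $n<50$ are computer-found chains that serve as certificates. The one difference is that the paper searches with split/amalgamation, shear and rescaling moves rather than with all factorizations $M=RS$, since the latter cannot be enumerated efficiently. For $n=15,21,37,45$ you take an ideal-theoretic route where the paper argues directly, and the two arguments share the same arithmetic core.

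\textbf{The paper's argument.} It solves $E_nR=RF_n$, which gives $R=\begin{pmatrix} a & nb\\ b & 2a\end{pmatrix}$. It then uses only $\det R\cdot\det S=(1-2n)^k$ and the fact that $2n-1\in\{29,41,73,89\}$ is prime, which gives $2a^2-nb^2=\pm(2n-1)^j$. Reducing modulo $n$, where $2n-1\equiv-1$, gives $2a^2\equiv\pm1$. So $2$ or $-2$ would be a square mod $n$, which fails for these four values.

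\textbf{Comparison.} Since $2(2a^2-nb^2)=N(2a+b\sqrt{2n})$, this is exactly your norm equation with $x=2a$, $y=b$. The paper's version needs no Latimer--MacDuffee correspondence, no localized orders, and no comparison of SE over $\mathbb{Z}_+$ and over $\mathbb{Z}$. It also yields a general lemma for the family $\begin{pmatrix}1&n\\p&1\end{pmatrix}$, $\begin{pmatrix}1&np\\1&1\end{pmatrix}$. Your version instead explains the obstruction as the non-principality of the ramified prime $(2,\sqrt{2n})$. In fact, for these $n$ we have $2n-1=-(1+\sqrt{2n})(1-\sqrt{2n})$ with both factors prime, so the localization changes nothing here.

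\textbf{What you still need.} As written, you leave the decisive step open, and the choice of modulus is not cosmetic. For $n=21$, $2$ is a square mod $7$ and $-2$ is a square mod $3$, so neither prime factor of $n$ alone gives a contradiction. You need the primality of $2n-1$ together with $2n-1\equiv-1 \pmod n$ to force a single sign in $x^2\equiv\pm2 \pmod n$. Then you must work modulo $n$ itself.
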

It is easily shown that the matrices are not SSE if $n$ is even using e.g. the Bowen Franks group \cite{BowenFranks}. For  $2\times 2$ matrices, the Bowen Franks group invariant can be restated as follows: if $A$ and $B$ are SSE then $gcd(A-I) = gcd(B-I)$ where $I$ is the identity matrix and $gcd(X)$ denotes the greatest common divisors  of all entries in $X$.

The case $n = 5$ was posed in an equivalent form as an open problem in \cite{baker87}.

The proof that some matrices are not shift equivalent is a direct consequence of the following lemma:
\begin{lemma}
  Let $n$ and $p$ be integers.  
  Let $A = \begin{pmatrix} 1 & n \\ p & 1 \end{pmatrix} $ and $B = \begin{pmatrix} 1 & np \\ 1 & 1\end{pmatrix}$ .
    If $np - 1$ is prime and neither $p$ nor $-p$ is a square modulo $n$, then $A$ is not shift equivalent to $B$.
\end{lemma}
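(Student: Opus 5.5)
The plan is to pass from shift equivalence over $\mathbb{Z}_+$ to a similarity over the ring $\Lambda = \mathbb{Z}[1/q]$, where $q = np-1$. That similarity question then becomes an arithmetic question about a binary quadratic form, which I will rule out by reducing modulo $n$. Note that $\det A = \det B = 1-np = -q$, so both matrices are nonsingular and $q$ is, up to sign, their determinant.

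\emph{Step 1: from SE to similarity over $\Lambda$.} Suppose $A$ and $B$ are shift equivalent via $R,S$ of lag $k$ as in Definition~\ref{defn:se}, with $AR = RB$, $SA = BS$, $A^k = RS$ and $B^k = SR$. All four matrices are $2\times 2$, so taking determinants gives $\det R \det S = (\det A)^k = (-q)^k$. Since $q$ is prime, $\det R = \pm q^m$ for some $m\ge 0$, so $R \in GL_2(\Lambda)$. Then $R^{-1} A R = B$, and the same holds for $A-I$ and $B-I$. Set $M = A - I = \begin{pmatrix} 0 & n \\ p & 0\end{pmatrix}$ and $N = B - I = \begin{pmatrix} 0 & np \\ 1 & 0\end{pmatrix}$. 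We conclude that $M$ and $N$ are conjugate in $GL_2(\Lambda)$. Primality of $q$ is used here, and only here, to force $\det R$ to be a unit of $\Lambda$ up to sign.

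\emph{Step 2: cyclicity criterion.} The matrix $N$ is a companion matrix: $e_1$ and $Ne_1 = e_2$ form a basis of $\Lambda^2$. Conjugacy therefore gives a vector $v = R e_1 \in \Lambda^2$ such that $v, Mv$ is a $\Lambda$-basis of $\Lambda^2$. This means $\det(v \mid Mv)$ is a unit of $\Lambda$, that is, $\pm q^j$ with $j\in\mathbb{Z}$. Writing $v = (a,b)^T$ we have $Mv = (nb, pa)^T$, hence
\[ \det(v\mid Mv) = pa^2 - nb^2 = \pm q^j. \]
Multiplying $a$ and $b$ by a high enough power of $q$ clears denominators. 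This yields integers $a,b$ and an integer $m \ge 0$ with $pa^2 - nb^2 = \pm q^m$.

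\emph{Step 3: reduction modulo $n$.} Since $q = np - 1 \equiv -1 \pmod n$, the equation gives $pa^2 \equiv \pm 1 \pmod n$. In particular $a$ is invertible modulo $n$. Hence $p \equiv \pm (a^{-1})^2 \pmod n$, so $p$ or $-p$ is a square modulo $n$, contradicting the hypothesis. Therefore no shift equivalence exists.

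The main point requiring care is Step 1. The determinant argument is what lets us avoid the general theory of dimension groups, and it goes through cleanly because the matrices are $2\times 2$ and $q$ is prime. The remaining steps are elementary.
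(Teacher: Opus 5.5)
Your proof is correct and is essentially the paper's argument. Your matrix $(v \mid Mv)$ with $v = Re_1$ is exactly the intertwiner $R = \begin{pmatrix} a & nb \\ b & pa \end{pmatrix}$ that the paper extracts from $AR = RB$; its determinant $pa^2 - nb^2$ divides $(1-np)^k$, is therefore $\pm (np-1)^m$ by primality, and the reduction modulo $n$ finishes both arguments. The detour through $GL_2(\mathbb{Z}[1/q])$ and the clearing of denominators is harmless but unnecessary, since $R$ already has integer entries.
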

\begin{proof}
Suppose there exists matrices $R,S$ and an integer $k$ s.t. $AR = RB$, $SB = AS$, and $RS = A^k$, $SR = B^k$.

  A straightforward computation shows that $R = \begin{pmatrix} a & nb \\ b & pa \end{pmatrix}$   and $S = \begin{pmatrix} pc & nd \\ d & c \end{pmatrix}$.
  From $RS = A^k$ we get by looking at the determinant that $(pa^2 - n b^2)(pc^2 - nd^2) = (1-np)^k$.
  Since $np-1$ is prime this implies that $pa^2 - nb^2 =  \pm (1-np)^{k'}$ for some integer $k'$ and therefore that $pa^2 \equiv \pm 1 \mod n$, which means that either $p$ or $-p$ is a square modulo $n$.
\end{proof}

\subsection{The Ashley example}

Our algorithm can be used with other matrices, and we were also able to solve Ashley eight-by-eight problem:
\begin{theorem}
  The Ashley  matrix is SSE to the matrix $\begin{pmatrix}2 \end{pmatrix}$. The SSE uses 23 steps and involves 10 by 10 matrices.
  \end{theorem}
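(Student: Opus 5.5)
The statement is a certificate-type result: to prove it, one exhibits an explicit chain $A = M_0, M_1, \dots, M_{23} = \begin{pmatrix}2\end{pmatrix}$ of nonnegative integer matrices, each of size at most $10\times 10$, together with factorizations $M_{i} = R_i S_i$, $M_{i+1} = S_i R_i$. Once such data is written down, the proof reduces to checking $23$ pairs of matrix products and that every entry is a nonnegative integer. My plan is therefore to split the work into two parts: a search that produces the chain, and a verification that is elementary and could in principle be done by hand or by a short independent program.

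For the search I would do a bidirectional exploration over matrices up to conjugation by permutation matrices. From the Ashley matrix, the easiest moves are state splittings and amalgamations (in-splittings and out-splittings), which are exactly the ESSE steps where $R$ or $S$ is a $0/1$ division/amalgamation matrix. Starting from $\begin{pmatrix}2\end{pmatrix}$, I would generate its neighbours as well; these are all matrices with the same nonzero spectrum $\{2\}$ and trace $2$. All intermediate matrices are constrained to have nonzero spectrum $\{2\}$, and I would cap their size at $10$. I would also cap the entry sizes, for instance allowing only entries in $\{0,1,2\}$. Each visited matrix is stored in a canonical form, for example the lexicographically least matrix under simultaneous row and column permutation, computed with a graph-canonization routine. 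I would grow both frontiers with a breadth-first or best-first strategy, ranking candidates by size and number of nonzero entries, and stop when the two frontiers intersect. The chain is then read off by backtracking and concatenating the two half-paths; reversing the steps from the $\begin{pmatrix}2\end{pmatrix}$ side is possible because SSE is symmetric.

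For the final write-up I would also record the shift equivalence this chain induces, $R = R_1\cdots R_{23}$ and $S = S_{23}\cdots S_1$, as a sanity check against the necessary conditions discussed earlier.

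The main obstacle is the size of the search space. In the $10\times 10$ layer the number of nonnegative matrices with nonzero spectrum $\{2\}$, even up to permutation, is enormous, and memory is the binding constraint. I would first try restricting to $0/1$ matrices with row sums at most $2$, since the Ashley matrix and $\begin{pmatrix}2\end{pmatrix}$ both live naturally among such matrices. If that is not enough, I would prune by requiring that each matrix at the intermediate sizes be reachable within a bounded number of steps from both ends. Only if the frontiers fail to meet would I relax the entry bound.
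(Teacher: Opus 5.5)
Your framing matches the paper's: the proof is an explicit chain of ESSEs found by a computer search over matrices up to permutation, and checking it is routine. The problem is that the search you describe would not produce the certificate in the statement.

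\textbf{Your preferred restriction cannot succeed.} Restricting to $0/1$ matrices with row sums at most $2$ provably fails. Every essential matrix SSE to the Ashley matrix has $2$ as its only nonzero eigenvalue, with multiplicity one. Such a matrix is irreducible, because essentiality puts every state into the unique nontrivial strongly connected component. Perron--Frobenius then forces all its row sums to equal $2$.

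In such a matrix of size at least $2$, a row split (out-splitting) of state $i$ duplicates column $i$. Any row $j\neq i$ with a $1$ in column $i$ then gets row sum $3$, and such a row exists by irreducibility. Column amalgamation is the inverse of row splitting, so it is excluded for the same reason. The only moves that keep you inside your space are therefore column splits and amalgamations of equal rows (in-splittings and in-amalgamations). The one exception is the last step $\begin{pmatrix}1&1\\1&1\end{pmatrix}\to(2)$.

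In-moves preserve the total in-amalgamation; this is Williams' one-sided theory. The Ashley matrix has pairwise distinct rows, so it is its own total amalgamation, an $8\times 8$ matrix. Everything your search reaches from $(2)$ totally amalgamates to $(2)$. Hence the two frontiers can never meet. Any SSE here must interleave in- and out-splittings and pass through matrices with row sums above $2$. Two smaller points: $(2)$ itself is not a $0/1$ matrix, and requiring nonzero spectrum $\{2\}$ prunes nothing, since it is an SSE invariant.

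\textbf{Nothing in your plan yields a chain of length $23$.} You simply posit $M_0,\dots,M_{23}$, but a search over single split/amalgamation moves returns a sequence of elementary moves. In the paper, the move-based search produces a $47$-step chain. That search also uses shear and rescaling moves, which let an SSE through $n\times n$ matrices be realized within size $2n-1$ rather than $2n+1$ for splits alone (Proposition~\ref{prop:moves}). So a split-only search capped at $10\times 10$ is also more limited than the paper's.

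The $23$-step certificate comes from a separate optimization pass. Whenever $S_k=MS'_k$ and $R_{k+1}=MR'_{k+1}$ for a full-column-rank $M$, the two steps are replaced by $(R_kM,S'_k)$ and $(R'_{k+1},S_{k+1}M)$, which are two triangles in Wagoner's complex. This can shrink the middle matrix or let two steps merge into one. Without such a compression step, your method could at best establish the SSE, not the ``23 steps'' part of the statement.
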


\section{A Bird's Eye view of the Algorithm}

\subsection{Finding a strong shift equivalence}
\label{proof:bird}
In this section, we describe the algorithm used to obtain our results. The algorithm follows a standard approach: Starting from a  matrix $A$ and a target matrix $B$, we iteratively compute all matrices that are SSE in $k$ steps to $A$, until either $B$ is found or we run out of memory. However, several challenges arise when implementing this approach directly.

First of all, no efficient algorithm is known, given a matrix $M$, to compute all matrices $R,S$ s.t $M = RS$, and therefore no efficient algorithm exists to compute all matrices that are elementary SSE to a given matrix.
Instead, at each step, given a matrix $M$, we apply the following \emph{moves} (using the terminology of \cite{eilers25}):

\begin{itemize}
\item(row splitting) Split the $i$-th row $u$ of $M$ into two rows $v_1, v_2$ s.t. $u = v_1 + v_2$, and duplicate the $i$-th column
\item(column splitting) Split the $i$-th column $u^T$ of $M$ into two columns $v_1^T, v_2^T$ s.t. $u = v_1 + v_2$, and duplicate the $i$-th row
\item(almagamation) If columns $i$ and $j$ are equal, remove one, and add the corresponding rows (this is the reverse of row splitting)
\item(almagamation) If rows $i$ and $j$ are equal, remove one, and add the corresponding columns (this is the reverse of column splitting)
\item(shear move) Subtract the $i$-th row of $M$ from the $j$-th row (if the result remains nonnegative) and add the $j$-th column to the $i$-th column
\item(shear move) Subtract the $i$-th column of $M$ from the $j$-th column (if the result remains nonnegative) and add the $j$-th row to the $i$-th row
\item(scale move) Divide the $i$-th row of $M$ by $k$ (if the result remains integral) and multiply the $i$-th column by $k$
\item(scale move) Divide the $i$-th column of $M$ by $k$ (if the result remains integral) and multiply the $i$-th row by $k$
\end{itemize}

All matrices $N$  obtained from $M$ via these moves are 
elementary SSE to $M$, though the converse does not hold. Nevertheless, these moves suffice to find a SSE from $M$ to $N$ if one exists (in fact, splits and amalgamations alone are sufficient \cite{williams1}).

"The primary challenge in implementation is the sheer number of matrices SSE to a given matrix. To address this, we adopt the following strategies:
\begin{itemize}
\item Matrices are considered up to permutation: two matrices $M$ and $N$ are identified if $M = PNP^{-1}$ for a permutation matrix $P$.  This requires computing a canonical representative for each equivalence class.
\item We restrict our consideration to \emph{essential} matrices, i.e., matrices with no zero rows or columns.
\item We bound the size of matrices under consideration, either by limiting their coefficients or their dimensions. Additionally, matrices are stored in a compressed format.
\end{itemize}

The number of matrices to process remains tantalizingly large. For example, we found approximately 72 million $5 \times 5$ or smaller matrices that are SSE to the matrix $A_3$.
To find the SSE from $A_9$ to $B_9$, we had to consider 23 billion matrices of size up to $6 \times 6$, which represents only a tiny fraction of all $6 \times 6$ matrices SSE to $A_9$.

For small $k$, we can visualize the algorithm’s behavior. Figure~\ref{graph:toto} shows all matrices of size at most $4 \times 4$ (up to permutation) reachable from $A_2$ by the algorithm:
The graph has an edge from matrix $M$ to matrix $N$ if $N$ can be obtained from $M$ by applying one of the moves listed above (since the moves are symmetric, the graph is undirected). The graph contains 570k nodes and 640k edges.
The graph exhibits an obvious symmetry corresponding to the transpose operation. Since $A_2$ is, up to permutation, equal to its transpose, the set of reachable matrices is closed under transposition. Moreover, if there is an edge from $M$ to $N$, then there is also an edge from $M^T$ to $N^T$.

More surprisingly, the graph’s structure clearly reveals two distinct clusters, with $A_2$  in one of the clusters, and $B_2$ in the other cluster.
Furthermore, there are relatively few paths between the clusters, and it is easy to see that the two clusters can be disconnected by removing just two vertices. We identified those vertices: they are the matrix
  \[  \begin{pmatrix}   0& 1& 0& 1\\    1& 0& 1& 1\\    0& 0& 1& 2\\    2& 1& 1& 1\\    \end{pmatrix}\]
  and its transpose. 

We observe a similar pattern for $A_3$ and $B_3$: The source matrix $A_3$ and the target matrix $B_3$ are in two separate clusters connected by a narrow set of intermediate matrices.
Our experiments suggest this pattern may also hold for larger $k$, though we were unable to visualize the full graph to confirm it.

\begin{figure}
  \includegraphics[width=10cm]{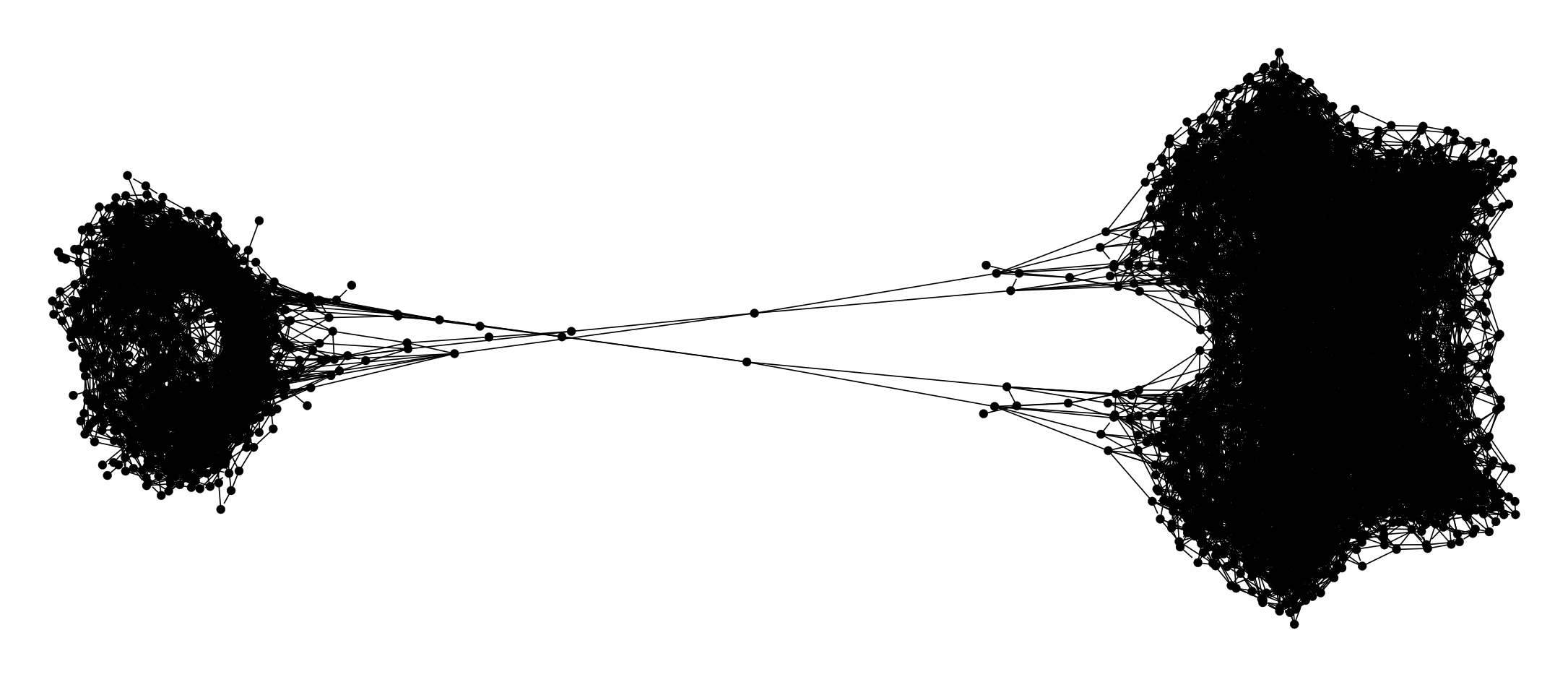}
  \caption{The graph of all 4 by 4 matrices that are SSE to the matrix $A_2$ found by the algorithm}
  \label{graph:toto}
\end{figure}

\subsection{Optimizing strong shift equivalences}

We now briefly explain how to optimize a strong shift equivalence between two matrices to obtain a shorter path.

Consider a strong shift equivalence, and in particular, two consecutive steps:
\begin{itemize}
\item $A_k = R_k S_k$
\item $S_k R_k = A_{k+1}$
\item $A_{k+1} = R_{k+1} S_{k+1}$
\item $S_{k+1} R_{k+1} = A_{k+2}$
\end{itemize}
Suppose there exists a nonnegative integer matrix $M$ with full column rank that left-factors both $S_k$ and $R_{k+1}$, i.e., $S_k = M S'_k$ and $R_{k+1} = M R'_{k+1}$.

We can then construct an alternative strong shift equivalence from $A_k$ to $A_{k+2}$ as follows:
\begin{itemize}
\item $A_k = (R_k M) S'_k$
\item $S'_k (R_k M) = A'_{k+1}$
\item $A'_{k+1} = R'_{k+1} (S_{k+1} M)$
\item $(S_{k+1} M) R'_{k+1} = A_{k+2}$
\end{itemize}
We must verify that the two expressions for $A_{k+1}$ agree. In fact $S'_k R_k = R'_{k+1} S_{k+1}$. This holds because $M (S'_k R_k) = S_k R_k = R_{k+1} S_{k+1} = M (R'_{k+1} S_{k+1})$ and $M$ has full column rank.

This new strong shift equivalence might be simpler than the original: If $M$ is nonsquare, $A'_{k+1}$ has  smaller dimension. Additionally,  $A'_{k+1}$ may become the identity matrix, in which case one can bypass it and go directly from $A_k$ to $A_{k+2}$ in one step.
Our optimization algorithm, given a SSE path from $A$ to $B$, iteratively generates all SSE paths obtainable via this method and attempts to find a shorter path.

This can be explained using Wagoner's concept of the $SSE(\mathbb{Z}_+)$ complex~\cite{wagonerRS,wagoner92}:
\begin{itemize}
\item \emph{Vertices} are square matrices
\item \emph{Edges} are elementary SSE: There is a directed edge from $A$ to $B$ labeled $(R,S)$ if $RS = A$, $SR = B$.
\item \emph{Triangles} are diagrams of the form:
\[
\begin{tikzcd}[column sep=large, row sep=large]
& C  \\
A \arrow[ur, "{(R_3, S_3)}"] \arrow[r, "{(R_1, S_1)}"'] & B \arrow[u, "{(R_2, S_2)}"'] 
\end{tikzcd}
\]
where $R_1R_2 = R_3$, $S_1 = R_2 S_3$ and $S_2 = S_3 R_1$
\end{itemize}

It is straightforward to see that our transformation corresponds to two triangles in the complex:
\[
\begin{tikzcd}[column sep=huge, row sep=large]
& A'_{k+1}   \arrow[dr, "{(R'_{k+1}, S_{k+1}M)}"] \\
A_k \arrow[ur, "{(R_kM, S'_k)}"] \arrow[r, "{(R_k, M S'_k)}"'] & A_{k+1} \arrow[u, "{(M, S'_k R_k)}"'] \arrow[r, "{(M R'_{k+1}, S_{k+1})}"'] & A_{k+2}
\end{tikzcd}
\]

In particular, our optimization strategy replaces the original path with a new path homotopic to it in this complex.
For matrices and paths in $\{0,1\}$, homotopic paths represent the same conjugacy of
the two subshifts of finite type.  In $\mathbb{Z}_+$, they represent
the same conjugacy up to simple automorphisms
\cite{wagonerRS,wagoner92}. Thus it is  intuitively reasonable to
consider the original path and the optimized path as representing the
same conjugacy. It would be interesting to develop criteria for distinguishing non-homotopic SSE paths, thereby determining whether two computer-generated proofs are fundamentally different or merely distinct representations of the same proof.

\section{Conclusion}

Many questions remain following the experiment. First, the SSE we found for the Baker matrices do not provide a clear path to generalization for arbitrary $k$, and thus the problem remains open for large $k$.
Another open question is whether the generalizations in Theorem~\ref{thm:general} can be extended whenever a strong shift equivalence exists between two $2 \times 2$ matrices, or if additional conditions are required.

Finally, in all our experiments, whenever we found a SSE (using the moves in Section~\ref{proof:bird}) between $2 \times 2$ matrices, this SSE required only intermediate matrices of corank 1. This leads us to conjecture that an SSE between matrices of corank $k$ requires only intermediate matrices of corank $k+1$.

\appendix

\section{Optimality of the algorithm}

As we explained before, the algorithm does not look at each step at each possible elementary strong shift equivalence, but only at a specific number of moves. The natural question is therefore: if the algorithm finds a SSE from $M$ to $N$ with $k$ moves involving only $p$ by $p$ matrices, what can we say about the length of the shortest SSE, and the size of the matrices ? We answer this question in this appendix.

\subsection{Moves}

A \emph{move} is a relation $\mathcal{R}$ on the set of all square nonnegative integer matrices with the property that $M \mathcal{R} N$ implies that $M$ and $N$ are SSE.
If $M \mathcal{R} N$, we say that $M$ is transformed into $N$ by the move $\mathcal{R}$.

A set of moves $S$ is \emph{complete} if, whenever $M$ and $N$ are SSE, one can tranform $M$ into $N$ using only moves in $S$.
A set of moves is \emph{symmetric} if, whenever there is a move in $S$ that transforms $M$ into $N$, there is one that transforms $N$ into $M$.

We will only be interested in what follows  in sets of moves that are complete and symmetric.

\subsection{The list of moves}
\begin{defn}
  An elementary \emph{division matrix} is a $\{0,1\}$ matrix $D$ of size $n-1\times n$ s.t. each row contains at least one $1$, and each column contains exactly one $1$.
  
An elementary \emph{shear matrix} is a matrix $E$ of size $n\times n$ which is equal to the identity except it has one nondiagonal  coefficient with value $1$.
If the nondiagonal coefficient is larger than $1$, the matrix is called an extended shear matrix.

An elementary \emph{scaling matrix} is a matrix $S$ of size $n\times n$ which is equal to the identity except one diagonal coefficient is larger than $1$.
\end{defn}

Division matrices were introduced in Williams \cite{williams1}, requiring that each row has exactly one symbol $1$. This conflicts with the definition in Lind and Marcus \cite{LindMarcus} that requires that each column has exactly one symbol $1$. We take here the second definition, and we add the prerequisite (hence the word ``elementary'') that the difference between the number of rows and the number of columns is exactly~1. Shear matrices and scaling matrices are well known in linear algebra and are sometimes called ``elementary matrices''. We use the vocabulary ``shear'' from Baker \cite{baker87}.

\begin{defn}
  We say that $N$ is an elementary split of $M$, and $M$ is an elementary amalgamation of $N$, if, up to permutation, there exists a matrix $A$  and an elementary  division matrix $D$ s.t. either $M = DA$ and $N = AD$ (row split), or $M = AD^T$ and $N = D^TA$ (column split). The split move is the relation $\mathcal{S}$ on essential matrices s.t. $M\,\mathcal{S}\,N$ if $N$ is a split of $M$ or $N$ is an amalgamation of $M$.
  
 We say that $N$ is obtained from $M$ by a shear if there exists a matrix $A$  and an elementary shear matrix $E$ s.t. either $M = EA$ and $N = AE$ , or $M = AE$ and $N = EA$. The shear move is the relation $\mathcal{E}$ on essential matrices s.t. $M\,\mathcal{E}\,N$ if $N$ is obtained from $M$ by shear. We define the extended shear move, noted $\mathcal{E}_{ext}$ in a similar way.       

 We say that $N$ is obtained from $M$ by a rescaling if there exists a matrix $A$  and an elementary scaling matrix $P$ s.t. either $M = PA$ and $N = AP$ , or $M = AP$ and $N = PA$. The product move is the relation $\mathcal{P}$ on essential matrices s.t. $M\,\mathcal{P}\,N$ if $N$ is obtained from $M$ by shear.
\end{defn}

All of these moves can be defined in terms of rows and columns operations:
\begin{itemize}
\item  A row split consists in splitting the $i$-th row $u$ into two nonzero rows $v,v'$ s.t. $u = v+v'$ and duplicating the $i$-th column. The column split is defined accordingly.
\item  A shear move consists in subtracting the $i$-th row from the $j$-th row, assuming the result is nonnegative and nonidentically zero, and then adding the $j$-th column to the $i$-th column, or doing the reverse.
An extended shear move consists in subtracting $k$-th times the $i$-th row from the $j$-th row.
\item  A rescaling move consists in dividing the $i$-th row by a number $p$ (if the result is integral) then multiplying the $i$-th column by the number $p$, or doing the reverse.
\end{itemize}

\begin{proposition}\label{prop:moves}
  \begin{itemize}
  \item If $M$ and $N$ are SSE by a sequence of matrices of size at most $n$, one can move  from $M$ to $N$ by split moves involving only matrices of size at most $2n+1$.
  \item If $M$ and $N$ are SSE by a sequence of matrices of size at most $n$, one can move  from $M$ to $N$ by split moves and shear moves involving only matrices of size at most $2n$.
  \item If $M$ and $N$ are SSE by a sequence of matrices of size at most $n$, one can move  from $M$ to $N$ by split moves, shear moves and rescaling moves, involving only matrices of size at most $2n-1$.
  \item If $M$ and $N$ are SSE by a sequence of $k$ matrices of size at most $n$, one can move  from $M$ to $N$ by a sequence of at most $O(n^2k)$ split moves, extended shear moves, and rescaling moves, involving only matrices of size at most $2n-1$.
  \end{itemize}    
\end{proposition}

In other words, if the algorithm finds a SSE using split moves, shear moves and rescaling moves from $M$ to $N$ with matrices of size $n$ by $n$, but not with $n-1$ by $n-1$ matrices, then we know the SSE with the smallest possible matrices needs matrices with at least $\frac{n+1}{2}$ rows and columns.

The proposition is a consequence of the following lemma:
\begin{lemma}
  \label{lemma:product}
  \begin{enumerate}
    \item\label{lemma:case:i} Every essential rectangular matrix $R$ with nonnegative integer coefficients is the product of elementary division matrices, their transpose, and of one permutation matrix. More precisely, if $R$ is a $n\times m$ matrix, then each matrix in the product has size bounded by $n+m+1$.   
    \item\label{lemma:case:ii} $R$ can be written as a product involving shear matrices as well as the previous classes of matrices. In this case, each matrix in the product has size bounded by $n+m$.
    \item\label{lemma:case:iii} $R$ can be written as a product involving scaling matrices as well as the previous classes of matrices. In this case, each matrix in the product has size bounded by $n+m-1$.
    \item\label{lemma:case:iv} $R$ can be written as a product involving extended shear matrices as well as the previous classes of matrices. In this case, each matrix in the product has size bounded by $n+m-1$, and the number of matrices in the product is bounded by $2m(n+m+2)$.
      \end{enumerate}
\end{lemma}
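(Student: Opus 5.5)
We factor $R$ ($n\times m$, essential, nonnegative integer) in stages. Each stage peels off one elementary factor on the left or right and leaves a smaller essential matrix, so each part is an induction. Throughout we track the sizes of the intermediate factors.

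\textbf{Part (\ref{lemma:case:i}).} The basic observation is that $R = D^T R'$, with $D$ an elementary division matrix of size $n\times(n+1)$, whenever we split one row $r_i$ of $R$ into two nonzero nonnegative rows summing to $r_i$. Here $D^T$ is $(n+1)\times n$ and merges the two new rows. Dually, $R = R'D$ splits a column. I will first split rows repeatedly until every row has total sum $1$, so that each row is a standard basis vector $e_j^T$; the matrix then has $N=\sum_{ij}R_{ij}$ rows. Only now is the matrix a product of division-type matrices and a permutation: it is a $\{0,1\}$ matrix with one $1$ per row and (by essentiality) at least one per column, so it is a non-elementary division matrix transposed. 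That factors further into elementary ones by merging rows one at a time, followed by a permutation.

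The size control requires interleaving, because $N$ is unbounded. Splitting all rows at once gives large intermediates. Instead I argue inductively on $N$: I peel one unit off an entry at a time and keep the intermediate dimension at most $n+m+1$. Concretely, pick a row with sum $\ge 2$ and split off a single unit $e_j^T$. The resulting row $e_j^T$ can be absorbed as follows. Column $j$ of $R$ then has a summand that can be split off as a column as well, leaving an $(n+1)\times(m+1)$ matrix whose new row and column intersect in an isolated $1$. This is a block of the form $\begin{pmatrix}R''&0\\0&1\end{pmatrix}$ up to the merge. Recurse on $R''$, which has a smaller entry sum.

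The bookkeeping of this step is the main obstacle. I need to show each peel can be arranged so that $R''$ stays essential and has size $n\times m$ again. The intermediates then never exceed $(n+1)\times(m+1)$ plus one transient index, i.e.\ $n+m+1$ overall in the worst case. If essentiality fails, the degenerate case is an entry equal to a whole row sum, and then a column merge reduces the size instead.

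\textbf{Parts (\ref{lemma:case:ii})--(\ref{lemma:case:iv}).} For (\ref{lemma:case:ii}), a shear lets us subtract one row from another in place, $R = E R'$, without enlarging the matrix. We therefore run a Euclid-like reduction on the rows. Whenever some row dominates another entrywise we subtract it. Only when no row dominates another do we split, and in that case the split needs one fewer spare dimension, giving the bound $n+m$. For (\ref{lemma:case:iii}), scaling removes the common factor of a row, $R = PR'$. This eliminates the case where a row is a multiple $k e_j^T$, which is exactly the case that forced the last extra dimension, so the bound drops to $n+m-1$. For (\ref{lemma:case:iv}), an extended shear subtracts $k$ times a row in a single step, so the Euclidean reduction on each column takes a number of steps linear in dimensions rather than in entry size. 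Counting at most $n+m+2$ elementary factors per processed column, over $2m$ passes (reducing each column and then cleaning it up), gives the bound $2m(n+m+2)$.
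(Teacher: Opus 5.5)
\textbf{Part (i) fails.} Your peeling identity is correct as far as it goes. If $R_{ij}\ge 1$ and row $i$ and column $j$ both have sum at least $2$, then $R = D\begin{pmatrix}R''&0\\0&1\end{pmatrix}D'^T$ with $R''=R-E_{ij}$ essential. (In the paper's convention a row split reads $R=DR'$; your $R=D^TR'$ has the wrong shape.) The gap is the recursion on the entry sum. The factorization you obtain for $R''$ has to be placed inside the block $\mathrm{diag}(R'',1)$, so each factor $F$ of $R''$ becomes $\mathrm{diag}(F,1)$. Every level of recursion therefore enlarges all inner factors by one, and there are about $\sum_{ij}R_{ij}$ levels. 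Already for $R=(N)$ your procedure ends with middle factor $I_N$, flanked by division matrices of sizes $1\times 2, 2\times 3,\dots,(N-1)\times N$ and their transposes. So the claim that intermediates never exceed $(n+1)\times(m+1)$ plus one transient index is false. It does not help that $R''$ is again $n\times m$: the recursion happens inside the enlarged factor. Nor can the isolated $1$ be amalgamated away, since its row and column duplicate no other row or column of $\mathrm{diag}(R'',1)$.

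\textbf{The missing idea.} The nesting depth must depend on the number of columns, and all entry-dependent work must happen at the outer level. The paper reduces $R$ to the identity by row operations alone, so $R$ is the product of the corresponding left factors. Concretely: split a row with nonzero first entry to create a pivot row $(1,0,\dots,0)$. Subtract the pivot from the other rows until each is $(1,0,\dots,0)$ or $(0,x)$ with $x\neq 0$; since the pivot is supported on a single column, these subtractions never break nonnegativity. Amalgamate the duplicate pivots. The result is $\mathrm{diag}(1,R')$ with $R'$ essential, having at most $n$ rows and $m-1$ columns, and one inducts on $m$. The size bounds then follow:
\begin{itemize}
\item With shears, the outer work needs $n+1$ rows, giving $n+m$.
\item In case (i), subtracting $v$ from $u$ is simulated by splitting $u$ into $v$ and $u-v$ and amalgamating the two copies of $v$. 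This costs one more row, giving $n+m+1$.
\item Scaling removes the split in the base case $m=1$, giving $n+m-1$.
\end{itemize}

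\textbf{Parts (ii)--(iv).} Your sketches do not supply this structure. They save ``one spare dimension'' relative to a bound for (i) that was never established. They do not say what the domination-driven reduction terminates in or what is recursed on, and such a reduction can stall, because the other columns block the subtractions. In (iv), a Euclid-type reduction costs a number of steps that grows with the logarithm of the entries, not with $n$ and $m$. What makes the count depend only on the dimensions is the single split creating a pivot $1$, followed by one extended shear per row. Your bound $2m(n+m+2)$ is read off from the statement rather than derived.
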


\begin{proof}[Proof of Proposition \ref{prop:moves}]
  Let $(x)$ denote one of the 4 cases of the proposition.  It is enough to prove case $(x)$ when $M$ and $N$ are SSE in one step, i.e.  $M = RS$ and $N = SR$.

Write $R = R_1 \dots R_n$ using case $(x)$ of the lemma. Then we obtain a SSE from $M$ to $N$ of length $n$, starting from $M = R_1 \dots R_nS$, then going through  $R_2 \dots R_n S R_1$, $R_3\dots R_n S_1 S_2$ up to $S R_1 \dots R_n = N$. By construction, this SSE only uses essential matrices and all moves are allowed in case $(x)$ with matrices of the given size.  
  \end{proof}

\begin{proof}[Proof of Lemma \ref{lemma:product}]
  We prove case $(\ref{lemma:case:ii})$, and then give some details on how to prove the other cases.

If $M = DN$ where $D$ is a division matrix, then $N$ is obtained, up to a permutation, from $M$ by splitting a row $u$ into two rows $u_1, u_2$ of sum equal to $u$.
If $M = D^T N$ where $D$ is a division matrix, then $N$ is obtained, up to a permutation, from $M$ by removing a duplicate row.
If $M = EN$ for $E$ a shear matrix, then $N$ is obtained from $N$ by subtracting a row from another, verifying the remaining row is nonnegative and nonidentically zero.

Therefore it is sufficient to prove the following result: We prove by recurrence on $m$ that any essential matrix $R$ with $n$ rows and $m$ columns can be obtained from the identity matrix by permuting rows, splitting rows, removing duplicate rows, and subtracting rows, with each matrix in the process having less than $n+m$ rows.  The proof is essentially Gaussian elimination, being careful during the proof that the matrices $R$ remain essential.

If $m = 1$, $R$ is a row vector, with each coefficients being non zero.
By splitting the first row if necessary, we may suppose that one of the coefficients of $R$ is $1$. We can then subtract this coefficient from each other coefficient until they are all equal to 1, and then remove the duplicate rows until we obtain the matrix $\begin{pmatrix} 1 \end{pmatrix}$. 

If $m > 1$, as $R$ is essential, there is at least one row that contains a nonzero coefficient in its first column. By splitting this row if necessary, we may suppose wlog that the row is of the form $\begin{pmatrix} 1 & 0 \cdots & 0 \end{pmatrix}$. We then subtract this row from all others until all rows are either of the form $\begin{pmatrix} 1 & 0 & \cdots & 0\end{pmatrix}$ or of the form $\begin{pmatrix} 0 & x \end{pmatrix}$ where $x$ is a nonzero vector.
  We then remove all duplicate rows of the form $\begin{pmatrix} 1 & 0 & \cdots & 0\end{pmatrix}$.

    Doing all this, we have transformed our matrix $R$ into a matrix of the form $\begin{pmatrix} 1 & 0 \\ 0 & R' \end{pmatrix}$, with $R'$ essential with at most $n$ rows and $m-1$ columns, the whole process having used intermediate matrices of size at most $n+1 \leq m+n$.

    By recurrence, $R'$ can be transformed into the identity matrix using intermediate matrices of size at most $m+n-1$. Therefore  $\begin{pmatrix} 1 & 0 \\ 0 & R' \end{pmatrix}$ can be transformed into the identity matrix using intermediate matrices of size at most $m+n$, which ends the proof.

    Case $(\ref{lemma:case:i})$ is similar, noting that we can simulate subtracting row $u$ from row $v$ by first splitting row $u$ into $v$ and $u-v$ and then removing the duplicate row $v$. This will add temporarily another row to the matrix.

    Case $(\ref{lemma:case:iii})$ only differs in the handling of the case $m = 1$, where we can use the scaling matrix to suppose that one of the coefficient is $1$ instead of splitting the row.

    Case $(\ref{lemma:case:iv})$ is similar to case $(\ref{lemma:case:iii})$ looking carefully at the number of operations. We need extended shears to be able to convert a coefficient $p$ to $0$ in one step instead of doing it in  $p$ steps.
    Each step of the recurrence uses $2n+1$ operations to go from a matrix with $n$ rows and $m$ columns to a matrix with $n+1$ rows and $m-1$ columns, which gives a total number of steps less than $\sum_{k=n}^{n+m-2} {2k+1}$.
\end{proof}

\clearpage

\section{$A_3$ is SSE to $B_3$ (unoptimized)}

This proof only use the moves provided in section~\ref{proof:bird}. Proofs in the later section are optimized for number of steps, but might be harder to analyze

\[ A_{0} = 

= A_{32} \]

\clearpage

\section{$A_3$ is SSE to $B_3$ (optimized)}

\[ A_{0} = 
%
= A_{17} \]

\clearpage

\section{Ashley eight-by-eight is SSE to the matrix $(2)$ (unoptimized)}

{\setlength{\arraycolsep}{2pt}
\[ A_{0} = 
%
= A_{47} \]

}
\section{Ashley eight-by-eight is SSE to the matrix $(2)$ (optimized)}

{\setlength{\arraycolsep}{2pt}
  \[ A_{0} = 
%
= A_{23} \]

  }

\end{document}